\theoremstyle{theorem}
\newtheorem{theorem}{Theorem}
\theoremstyle{definition}
\newtheorem*{theorem1}{Theorem 1}
\newtheorem{lemma}[theorem]{Lemma}
\newtheorem{proposition}[theorem]{Proposition}
\DeclareMathOperator{\Deg}{Deg}
\renewcommand{\d}{\delta}
\newcommand{\D}{\Delta}
\begin{document}


\title{Distinguishing wheel graphs by the alliance polynomial}


\author[W. Carballosa]{Walter Carballosa}
\address{Department of Mathematics and Statistics, Florida International University, 11200 S.W. 8th Street, Miami FL 33199, USA}

\author[O. Rosario]{Omar Rosario}
\address{Faculty of Mathematics, Autonomous University of Guerrero, Carlos E. Adame 54, La Garita, Acapulco 39650, Mexico}

\author[J.M. Sigarreta]{Jos\'e M. Sigarreta}
\address{Institute of Physics "Luis Rivera Terrazas", Benem\'erita Autonomous University of Puebla, Av. San Claudio Esq. 18 Sur, Edificio IF1, Ciudad Universitaria, Col. San Manuel, Puebla, Puebla,  M\'exico}

\author[Y. Torres-Nu\~nez]{Yadira Torres-Nu\~nez}
\address{Department of Mathematics, Miami Dade College, Wolfson Campus, 300 NE Second Ave., Miami, FL 33132, USA}

\maketitle{}

\begin{abstract}
Distinctive power of the alliance polynomial has been studied in previous works, for instance, it has been proved that the empty, path, cycle, complete, complete without one edge and star graphs are characterized by its alliance polynomial. Moreover, it has been proved that the family of alliance polynomial of regular graphs with small degree is a very special one, since it does not contain alliance polynomial of graphs other than regular graph with the same degree. In this work we prove that the alliance polynomial also determines the wheel graphs. 



{\it Keywords:}  Alliance polynomial; wheel graphs; defensive alliances 

{\it AMS Subject Classification numbers:}     05C69; 11B83.

\end{abstract}


\section{Introduction.}

Several graph parameters have been used to define a graph polynomial, for instance,
the parameters associated to matching sets \cite{F}, independent sets \cite{HL}, domination sets \cite{ALL}, differential number \cite{BCLS}, chromatic numbers \cite{T} and many others.
In \cite{CRST}, authors introduce the alliance polynomial of a graph using the exact index of alliance. 
In \cite{CRST3}, they study the alliance polynomials of regular graphs.

We begin by stating the used terminology.  Throughout
this paper, $G=(V,E)$ denotes a simple graph of order $|V|=n$ and size $|E|=m$.
We denote two adjacent vertices $u$ and $v$ by
$u\sim v$. For a nonempty set $X\subseteq V$, and a vertex $v\in V$,
 $N_X(v)$ denotes the set of neighbors  $v$ has in $X$:
$N_X(v):=\{u\in X: u\sim v\},$ and the degree of $v$ in $ X$ will be
denoted by $\delta_{X}(v)=|N_{X}(v)|$.
We denote by $\delta$ and $\Delta$ the minimum and maximum degree of $G$, respectively.
In particular, if $G$ is a regular graph we denote by $\Delta$ the degree of $G$, in fact, we write $G$ is $\Delta$-regular.
The subgraph induced by
$S\subset V$ will be denoted by  $\langle S\rangle $ and the
complement of the set $S\in V$ will be denoted by $\bar{S}$.

A nonempty set $S\subseteq V$ is a \emph{defensive  $k$-alliance} in
$G=(V,E)$,  $k\in [-\Delta,\Delta]\cap \mathbb{Z}$, if $\delta _S(v)\ge \delta_{\bar{S}}(v)+k$ for
every $ v\in S$.
We consider the value of $k$ in the set of integers $\mathcal{K}:= [-\Delta,\Delta]\cap \mathbb{Z}$.
In some graph $G$, there are some values of  $k\in \mathcal{K}$, such that do not exist defensive
$k$-alliances in $G$. For instance, in the star graph $S_n$ with $n$ vertices, does no exist defensive $k$-alliances for $k\ge 2$; besides, $V(G)$ is a defensive $\delta$-alliance in $G$.
Notice that for each $S\subset V(G)$ there exists $k\in\mathcal{K}$ such that $S$ is a defensive $k$-alliance in $G$.
We denote by $k_{S}:=\displaystyle\max_{} \{k\in\mathcal{K} \,:\,  S \text{ is a defensive $k$-alliance}\}$.
We say that  $k_{S}$ is the \emph{exact index of alliance} $S$, or also, $S$ is an \emph{exact defensive $k_{S}$-alliance} in $G$.

Let $G$ be a graph.
We define the \emph{alliance polynomial} of a graph $G$ as
\begin{equation}\label{eq:Poly1}
    A(G;x)= \displaystyle\sum_{k\in \mathcal{K}} A_{k}(G) x^{n+k},
\end{equation}
 with  $A_{k}(G)$  the number of connected exact defensive $k$-alliances in $G$.

We say that a graph $G$ is characterized by a graph polynomial $f$ if for every graph $G^\prime$ such that $f(G^\prime) =f(G)$ we have that $G^\prime$ is isomorphic to $G$. The class of graphs $\mathcal{F}$ is characterized by a graph polynomial $f$ 
if every graph $G\in\mathcal{F}$ is characterized by $f$.
This notion has been studied for the chromatic polynomial, the Tutte polynomial and the matching polynomial in \cite{MN,N}. It is shown, \emph{e.g.},that several well-known families of graphs are determined by their Tutte polynomial, among them the class of wheels, squares of cycles, complete multipartite graphs, ladders, M\"{o}bius ladders, and hypercubes.

The distinctive power of the alliance polynomial has been studied in previous works, for instance, in \cite{CRST} authors proved that the empty, path, cycle, complete, complete without one edge and star graphs are characterized by their alliance polynomials. Moreover, in \cite{CRST3} was proved that the family of alliance polynomial of $\Delta$-regular graphs with small degree (\emph{i.e.}, $\Delta\le5$) is also characterized by the alliance polynomial. In this work we prove that the alliance polynomial also characterizes the wheel graphs. 

The main result in this paper is Theorem \ref{t:Wn}. In our attempt to prove it we obtain a close formula for the alliance polynomial of the wheel graphs using combinatorial arguments, see Proposition \ref{p:Wn}. The proof of Theorem \ref{t:Wn} is given in the next section.

\begin{theorem}\label{t:Wn}
Wheel graphs are characterized by the alliance polynomial. 
\end{theorem}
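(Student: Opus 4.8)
The plan is to treat the two implications separately. That distinct wheels have distinct alliance polynomials is immediate from Proposition \ref{p:Wn}: the wheel $W_n=K_1+C_n$ has order $n+1$, minimum degree $\delta=3$ and maximum degree $\Delta=n$, so its polynomial has lowest exponent $(n+1)-n=1$ and highest exponent $(n+1)+3=n+4$, and both the span and the coefficients recorded in Proposition \ref{p:Wn} depend injectively on $n$. The substance of Theorem \ref{t:Wn} is therefore the converse: if $G$ is any graph with $A(G;x)=A(W_n;x)$, then $G\cong W_n$. Throughout I write $N=|V(G)|$ and let $\Delta,\delta$ denote the maximum and minimum degree of $G$, so that $A(G;x)=\sum_k A_k(G)\,x^{N+k}$.

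First I would extract the basic parameters of $G$ from its extreme coefficients, using the identity $k_S=\min_{v\in S}\bigl(2\delta_S(v)-\deg v\bigr)$, which follows from $\delta_{\bar S}(v)=\deg v-\delta_S(v)$. If $S$ is connected with $|S|\ge 2$ then every $v\in S$ has $\delta_S(v)\ge 1$, whence $k_S\ge 2-\Delta$; thus the two lowest exponents $N-\Delta$ and $N-\Delta+1$ of $A(G;x)$ receive contributions only from singletons, so the coefficient of $x^{N-\Delta}$ equals the number of vertices of degree $\Delta$ and that of $x^{N-\Delta+1}$ equals the number of vertices of degree $\Delta-1$. Matching these against $A(W_n;x)$, whose lowest term is $x^{1}$ with coefficient $1$, forces $N-\Delta=1$ together with exactly one vertex of degree $\Delta=N-1$. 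Hence $G$ has a unique universal vertex $u$, and writing $H:=\langle V(G)\setminus\{u\}\rangle$ we obtain the join decomposition $G=K_1+H$ with $|V(H)|=N-1$ and $\deg_G(w)=\deg_H(w)+1$ for every $w\neq u$.

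Next I would pin down the order and the local structure by matching the remaining coefficients of $A(G;x)$ against the closed form of Proposition \ref{p:Wn}; this is the technical heart of the argument. Since $G\neq K_N$ (a complete graph has more than one vertex of maximum degree), the minimum degree is attained off $u$, so $\delta=\delta(H)+1$, and because $V(G)$ is always an exact $\delta$-alliance the equality of highest exponents gives $N+\delta\le n+4$. The low-exponent band is where the degree sequence is read off: the singleton contributions $\sum_{w\neq u}x^{N-\deg w}$ occupy small exponents, and once the coefficient of $x^{1}$ equals $1$ the closed form of Proposition \ref{p:Wn} leaves room there only for vertices of degree $3$. Carrying this comparison out should force $N=n+1$, $\delta=3$, and that \emph{every} non-hub vertex has degree exactly $3$; equivalently $H$ is $2$-regular on $n$ vertices, hence a disjoint union of cycles $H\cong C_{a_1}\cup\cdots\cup C_{a_r}$ with $a_1+\cdots+a_r=n$ and each $a_i\ge 3$. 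Controlling these middle coefficients precisely enough is exactly the step where the explicit formula is indispensable, and I expect it to be the main obstacle.

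It then remains to prove $r=1$, i.e.\ to distinguish the genuine wheel $K_1+C_n$ from a multi-wheel $K_1+(C_{a_1}\cup\cdots\cup C_{a_r})$ with $r\ge 2$, all of which share order, degree sequence and universal vertex. This last step I would settle cleanly with the total count $A(G;1)=\sum_k A_k(G)$, the number of connected exact alliances. Because $u$ is universal, every one of the $2^{n}$ vertex subsets containing $u$ induces a connected subgraph, while the connected subsets avoiding $u$ are precisely the nonempty connected induced subgraphs of $H$; hence $A(G;1)=2^{n}+f(H)$, where $f$ counts such subgraphs. For a disjoint union of cycles one has $f(H)=\sum_i(a_i^2-a_i+1)$, and a short convexity estimate shows that, among all $2$-regular graphs on $n$ vertices, this is strictly maximized by the single cycle $C_n$. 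Thus $A(G;1)=A(W_n;1)$ forces $f(H)=f(C_n)$, whence $r=1$, $H\cong C_n$, and $G\cong W_n$, completing the proof.
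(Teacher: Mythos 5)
Your opening and closing steps are sound and coincide with the paper's own argument: extracting the unique universal vertex from the two lowest exponents is exactly Lemma \ref{p:AlliPoly} i), and your final count $A(G;1)=2^{|V(H)|}+f(H)$ with $f(C_{a_1}\cup\cdots\cup C_{a_r})=\sum_i(a_i^2-a_i+1)$, beaten by the single cycle via convexity, is precisely how the paper rules out a disconnected $2$-regular part (there phrased through Theorem \ref{t:join} and Proposition \ref{p:Disjoint_Cn}). The genuine gap is the middle step, which you do not prove but only predict (``carrying this comparison out \emph{should} force $N=n+1$, $\delta=3$, \dots''; ``I expect it to be the main obstacle''). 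That step is the bulk of the paper's proof, and the heuristic you offer for it is unsound as stated: it is not true that the low-exponent band of $A(G;x)$ is populated only by the singleton contributions $x^{N-\deg w}$. Any set $S$ containing the universal vertex $u$ with $|S|=t$ small has exact index $2t-1-N$ (the hub is the weakest member), so it contributes to the exponent $2t-1$; these hub-sets are what produce the odd-exponent terms in \eqref{eq:Wn}, and they sit in the same band as the singletons of high-degree vertices. Consequently the \emph{support} of the polynomial cannot exclude a non-hub vertex $v$ of degree $d\geq 4$: whenever $N-d$ is odd, the term $x^{N-d}$ already appears in the wheel polynomial. The paper eliminates this case by a coefficient count, not a support argument: writing $N-d=2t-1$, it shows that every $t$-set containing $u$ is still an exact $(-d)$-alliance of $G$, so together with the singleton $\{v\}$ the coefficient of $x^{N-d}$ in $A(G;x)$ strictly exceeds the corresponding coefficient of the wheel, a contradiction; degrees below $3$ are excluded separately via the proper-subgraph result \cite[Proposition 2.7]{CRST}, and a second count of $A_{-3}$ is then needed to force \emph{all} non-hub degrees to equal $3$.

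The determination of the order is missing for the same reason. Your inequalities only give an upper bound on $N$; the lower bound requires evaluating $A(W_n;1)>2^{n-1}$ from Proposition \ref{p:Wn} and invoking Lemma \ref{p:AlliPoly} ii), which you never use for this purpose, and the two remaining excess orders must then be killed by separate ad hoc arguments: a pendant vertex would create a second defensive $1$-alliance, contradicting the single term of top degree, while the intermediate order forces minimum degree $2$ and an analysis of degree-$2$ vertices, including the special case $W_4\simeq K_4$, which the paper settles by citing the uniqueness of $A(K_4;x)$ from \cite[Theorem 3.9]{CRST}. (Note also that $W_4$ is an edge case for your second paragraph: there the lowest coefficient of the wheel polynomial is $4$, not $1$, since all vertices of $K_4$ have maximum degree.) In short, your skeleton is the right one and matches the paper's, but the part you defer as ``the main obstacle'' is exactly where the paper's work lies, and the route you sketch through it --- reading the degree sequence off the low-exponent band --- would fail without the coefficient-counting corrections above.
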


\section{Proof of the main result}\label{wheel}

Let $G_1=(V_1,E_1)$ and $G_2=(V_2,E_2)$ two graphs with $V_1\cap V_2=\varnothing$. The \emph{graph join} $G_1+G_2$ of $G_1$ and $G_2$ has $V(G_1+G_2)=V_1 \cup V_2$ and two different vertices $u$ and $v$ of $G_1+G_2$ are adjacent if $u\in V_1$ and $v\in V_2$, or $[u,v]\in E_1$ or $[u,v]\in E_2$. 
The following result that will be useful appears in \cite[Theorem 2.12]{CRST}

\begin{theorem}\label{t:join}
  Let $G_1,G_2$ be two graphs with order $n_1$ and $n_2$, respectively. Then
  \[
  A(G_1+ G_2;x)= A(G_1;x) + A(G_2;x)+ \widetilde{A}(G_1,G_2;x),
  \]
  where $\widetilde{A}(G_1,G_2;x)$ is a polynomials with $\widetilde{A}(G_1,G_2;1)=(2^{n_1}-1)(2^{n_2}-1)$ and $Deg\big(\widetilde{A}(G_1,G_2;x)\big)=Deg\big(A(G_1\cup G_2;x)\big)$.
\end{theorem}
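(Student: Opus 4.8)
The plan is to organize the connected vertex subsets of the join $G_1 + G_2$ according to how they meet the two sides. Because every vertex of $V_1$ is adjacent to every vertex of $V_2$, any set $S$ with $S\cap V_1\neq\varnothing$ and $S\cap V_2\neq\varnothing$ is automatically connected, and for $v\in S\cap V_1$ the degrees bookkeep as
\[
\delta_S(v)=\delta^{G_1}_{S\cap V_1}(v)+|S\cap V_2|,\qquad \delta_{\bar S}(v)=\delta^{G_1}_{V_1\setminus S}(v)+\big(n_2-|S\cap V_2|\big),
\]
with the symmetric identities for $v\in S\cap V_2$. Every connected $S\subseteq V$ then falls into exactly one of three classes: $S\subseteq V_1$, $S\subseteq V_2$, or $S$ meets both sides. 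I would \emph{define} $\widetilde A(G_1,G_2;x)$ to be the generating sum $\sum x^{\,n+k_S}$ taken over the third class, so that splitting the defining sum \eqref{eq:Poly1} along these three classes yields the displayed identity as soon as the first two classes are identified.

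For a set $S\subseteq V_1$ the identities above specialize (with $|S\cap V_2|=0$) to $\delta_S(v)-\delta_{\bar S}(v)=\big[\delta^{G_1}_S(v)-\delta^{G_1}_{V_1\setminus S}(v)\big]-n_2$, so the exact index of $S$ in $G_1+G_2$ is exactly $n_2$ below its exact index $k_S^{G_1}$ in $G_1$, while $S$ is connected in $G_1+G_2$ if and only if it is connected in $G_1$. Since the order jumps from $n_1$ to $n=n_1+n_2$, the exponent $n+k_S$ contributed to $A(G_1+G_2;x)$ equals the exponent $n_1+k_S^{G_1}$ that $S$ contributes to $A(G_1;x)$. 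Summing over this class reproduces $A(G_1;x)$ verbatim, and symmetrically the second class reproduces $A(G_2;x)$; this establishes the decomposition with $\widetilde A$ as defined.

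For the evaluation at $x=1$, note that $\widetilde A(G_1,G_2;1)$ simply counts the sets in the third class, since $1^{n+k}=1$ and, as recalled in the introduction, every nonempty subset is an exact defensive $k$-alliance for a \emph{unique} $k\in\mathcal K$ and hence contributes exactly one monomial. A set of the third class is obtained by choosing a nonempty subset of $V_1$ and a nonempty subset of $V_2$ independently, and every such pair is connected through the join; the count is therefore $(2^{n_1}-1)(2^{n_2}-1)$, as claimed.

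The remaining and hardest point is the degree. Here I would read off the leading monomial of $\widetilde A$ by locating a set $S$ of the third class of maximal exact index, using the two bookkeeping identities to write $k_S=\min\big(\min_{v\in S\cap V_1}[\,\cdots\,]+2|S\cap V_2|-n_2,\ \min_{v\in S\cap V_2}[\,\cdots\,]+2|S\cap V_1|-n_1\big)$, and then match the resulting exponent against $\mathrm{Deg}\big(A(G_1\cup G_2;x)\big)$, whose top term comes from the highest-index connected subset living inside a single factor. The delicate part is the extremal analysis: one must show that the maximizing third-class set and the maximizing single-factor set yield the same exponent, i.e.\ that adjoining the opposite side to an optimal alliance of one factor does not move the top degree away from that of $A(G_1\cup G_2;x)$. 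I expect this comparison of extremal alliances — balancing the internal surplus $\delta_S-\delta_{\bar S}$ against the shift $\pm(2|S\cap V_i|-n_i)$ forced by the complete bipartite connections — to be the main obstacle, and the place where the fine structure of the two factors must be controlled.
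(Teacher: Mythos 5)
You prove two of the three clauses correctly, and by the natural route (note there is nothing in this paper to compare against: Theorem~\ref{t:join} is imported without proof from \cite[Theorem 2.12]{CRST}). The three-class split of the connected subsets of $G_1+G_2$, the observation that any set meeting both sides is automatically connected, and the bookkeeping $\delta_S(v)-\delta_{\bar S}(v)=\bigl[\delta^{G_1}_S(v)-\delta^{G_1}_{V_1\setminus S}(v)\bigr]-n_2$ for $S\subseteq V_1$ --- whence the exponent $n+k_S$ contributed in the join equals $n_1+k_S^{G_1}$ and the two one-sided classes reproduce $A(G_1;x)$ and $A(G_2;x)$ verbatim --- are all correct, as is the count $\widetilde{A}(G_1,G_2;1)=(2^{n_1}-1)(2^{n_2}-1)$. (One small point worth a sentence in a final write-up: the shift argument uses that in any graph $k_S=\min_{v\in S}\bigl(\delta_S(v)-\delta_{\bar S}(v)\bigr)$ with no clipping at the endpoints of $\mathcal K$, even though $\mathcal K$ is different for $G_1$ and for $G_1+G_2$; this holds because $|\delta_S(v)-\delta_{\bar S}(v)|\le\delta(v)\le\Delta$, so it is harmless, but it should be said.)

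The genuine gap is the degree clause, which you explicitly leave open, and your instinct that the extremal comparison is ``delicate'' actually understates the problem: the clause is false as printed, so no extremal analysis can close it. Take the paper's own application, $G_1=E_1$ and $G_2=C_{n-1}$. By Proposition~\ref{p:Wn} (or directly: $S=V(W_n)$ is a connected exact defensive $3$-alliance meeting both sides), $\Deg\bigl(\widetilde{A}(E_1,C_{n-1};x)\bigr)=n+3$; but in the disjoint union $E_1\cup C_{n-1}$ every vertex has degree at most $2$, and since $k_S\le\min_{v\in S}\delta(v)$ for every $S$, every connected exact alliance there has $k\le2$, with $k=2$ attained by the full cycle, so $\Deg\bigl(A(E_1\cup C_{n-1};x)\bigr)=n+2\neq n+3$. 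Presumably the intended clause is $\Deg(\widetilde{A})=\Deg\bigl(A(G_1+G_2;x)\bigr)$, which does hold for wheels, but even that fails in general: for $G_1=K_{10}\cup E_N$ with $N$ large and $G_2=E_1$, any set meeting both sides either contains a vertex of $E_N$ (surplus $1$) or forces the apex to have surplus $2|S\cap V_1|-n_1\le 10-N$, so $\Deg(\widetilde{A})=n+1$, whereas the clique alone gives $\Deg\bigl(A(G_1+G_2;x)\bigr)=n_1+9=n+8$. So the right move is not to pursue the extremal matching you sketch, but to weaken or drop the degree clause; fortunately the present paper only ever uses the decomposition and the evaluation at $x=1$ (in the last step of the proof of Theorem~\ref{t:Wn}), and both of these your argument fully establishes.
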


Polynomial $\widetilde{A}(G_1,G_2;x)$ is associated to defensive alliances in $G_1+ G_2$ with no null part in both original graphs, i.e., associated to defensive alliances $S\subset V(G_1+ G_2)$ with $S\cap V(G_1) \neq \emptyset$ and $S\cap V(G_2) \neq \emptyset$.
Since wheel graph $W_{n}$ is an isomorphic graph to $E_1+ C_{n-1}$ for every $n\ge4$, Theorem \ref{t:join} has the following consequences.

\begin{proposition}\label{p:Wn}
   Let $W_n$ be a wheel graph with order $n\geq 4$. Then
   \begin{equation}\label{eq:Wn}
     \begin{aligned}
      A(W_n;x) =
      &  A(E_1;x) + A(C_{n-1};x) + \sum_{k=2}^{\lfloor n/2\rfloor} {n-1 \choose k-1} \ x^{2k-1} + \sum_{k=\lfloor n/2 \rfloor + 1}^{n-1} a_{n,k} \ x^{n-1}  \\
      & +  \xi_n \, b_{n,(n+1)/2} \ x^n  + \sum_{k= \lceil n/2\rceil + 1}^{n-1} b_{n,k} \ x^{n+1} + x^{n+3}.
     \end{aligned}
   \end{equation}
   where, $\xi_n = 0$ if $n$ is even and $\xi_n = 1$ if $n$ is odd,
   \[
   b_{n,k} = (n-1)\displaystyle\sum_{r=1}^{\lfloor\frac{k-1}2\rfloor}  \frac{{n-k-1 \choose r-1} {k-1-r \choose r}}{k-1-r}
   \quad\quad {\text and } \quad\quad a_{n,k} = {n-1 \choose k-1} - b_{n,k}.\]
\end{proposition}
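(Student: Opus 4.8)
The plan is to apply Theorem~\ref{t:join} with $G_1=E_1$ and $G_2=C_{n-1}$, which gives
\[
A(W_n;x)=A(E_1;x)+A(C_{n-1};x)+\widetilde{A}(E_1,C_{n-1};x),
\]
and then to identify $\widetilde{A}(E_1,C_{n-1};x)$ with the remaining summands of \eqref{eq:Wn}. By the description following Theorem~\ref{t:join}, $\widetilde{A}$ is the generating polynomial of the connected exact defensive alliances $S$ of $W_n$ that meet both factors. Since the unique vertex $c$ of $E_1$ (the hub) is adjacent to every rim vertex, such an $S$ is exactly a set $\{c\}\cup T$ with $\emptyset\neq T\subseteq V(C_{n-1})$, and every such set is automatically connected. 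Hence it suffices to compute $k_S$ for each $S=\{c\}\cup T$ and group the resulting $2^{n-1}-1$ monomials $x^{n+k_S}$ by $k=|S|$.

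First I would compute the exact index. Writing $k=|S|=|T|+1$, the hub satisfies $\delta_S(c)-\delta_{\bar S}(c)=(k-1)-(n-k)=2k-1-n$, while a rim vertex $v\in T$ having $c_v\in\{0,1,2\}$ of its two cycle-neighbours inside $T$ satisfies $\delta_S(v)-\delta_{\bar S}(v)=(1+c_v)-(2-c_v)=2c_v-1$. Taking the minimum over $S$ gives $k_S=\min\{\,2k-1-n,\ \rho(T)\,\}$, where $\rho(T)=\min_{v\in T}(2c_v-1)$ equals $-1$ when $T$ contains a vertex isolated in $\langle T\rangle$, equals $3$ when $T=V(C_{n-1})$, and equals $1$ otherwise (every maximal arc of $T$ has length at least $2$, but $T$ is a proper subset).

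Next I would sort the monomials by $k$ using the thresholds $2k-1-n\le-1\iff k\le\lfloor n/2\rfloor$ and $2k-1-n\ge1\iff k\ge\lceil n/2\rceil+1$. For $2\le k\le\lfloor n/2\rfloor$ the hub value is at most $-1$ and so dominates $\rho(T)\ge-1$, giving $k_S=2k-1-n$ for all $\binom{n-1}{k-1}$ choices of $T$ and producing the first sum. For larger $k$ the hub value is nonnegative, so $k_S$ is governed by $\rho(T)$: subsets with an isolated vertex give $k_S=-1$ (the $x^{n-1}$ terms), subsets with all arcs of length $\ge2$ give $k_S=1$ (the $x^{n+1}$ terms), and the single full cycle gives $k_S=3$ (the term $x^{n+3}$). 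The exponent $x^n$ arises only when $n$ is odd and $k=(n+1)/2$, where the hub value is exactly $0$ so the no-isolated-vertex subsets yield $k_S=0$, which explains the factor $\xi_n$. Thus every coefficient counts subsets of the labelled cycle $C_{n-1}$ of a fixed size with prescribed block structure; writing $b_{n,k}$ for the number of $(k-1)$-subsets all of whose maximal arcs have length $\ge2$, the subsets possessing an isolated vertex number $\binom{n-1}{k-1}-b_{n,k}=a_{n,k}$.

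The main obstacle is the closed form for $b_{n,k}$. Here I would classify the admissible $T$ by their number $r$ of maximal arcs, which ranges over $1\le r\le\lfloor(k-1)/2\rfloor$ since each arc has length $\ge2$. A configuration with $r$ arcs is an alternating cyclic arrangement of $r$ blocks of total length $k-1$ (each $\ge2$) and $r$ gaps of total length $(n-1)-(k-1)=n-k$ (each $\ge1$) around the $n-1$ labelled vertices. The number of compositions of $k-1$ into $r$ parts each $\ge2$ is $\binom{k-r-2}{r-1}$, that of $n-k$ into $r$ parts each $\ge1$ is $\binom{n-k-1}{r-1}$, and the cyclic placement of the blocks contributes the standard factor $\tfrac{n-1}{r}$; multiplying these and using $\tfrac1r\binom{k-r-2}{r-1}=\tfrac{1}{k-1-r}\binom{k-1-r}{r}$ reproduces the stated summand, so summing over $r$ finishes the proof. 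The one genuinely delicate point is justifying the cyclic factor $\tfrac{n-1}{r}$, i.e.\ counting subsets of a labelled cycle with exactly $r$ blocks of occupied vertices; I would establish this either by the usual anchoring argument (counting pairs consisting of a configuration together with a marked block) or by cutting the cycle at a fixed vertex and bookkeeping the arc that wraps around.
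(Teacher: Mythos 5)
Your proposal is correct, and its skeleton coincides with the paper's: both apply Theorem \ref{t:join} to $W_n\simeq E_1+C_{n-1}$, reduce $\widetilde{A}(E_1,C_{n-1};x)$ to sets $S=\{c\}\cup T$ containing the hub, and determine the exact index as the minimum of the hub contribution $2k-1-n$ and the rim contribution ($-1$ if $T$ has an isolated vertex, $1$ if every maximal arc has length at least $2$, $3$ for the full cycle); your threshold bookkeeping ($k\le\lfloor n/2\rfloor$, $k\ge\lceil n/2\rceil+1$, and the middle case $k=(n+1)/2$ for odd $n$ giving the $\xi_n$ term) reproduces exactly the paper's grouping of exponents. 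The genuine difference lies in the enumeration of $b_{n,k}$. The paper counts the cyclic binary strings with no isolated $1$ by cutting the cycle at a fixed vertex and distinguishing four boundary cases (first digit $0$; first $1$ then $0$; first two $1$s with last digit $0$; first two $1$s with last digit $1$), obtaining four binomial products whose sum it never explicitly reassembles into the stated closed form. You instead count pairs consisting of a configuration together with a marked block, which yields the cyclic factor $\frac{n-1}{r}$ times the two composition counts $\binom{k-r-2}{r-1}$ and $\binom{n-k-1}{r-1}$, and then the identity $\frac{1}{r}\binom{k-r-2}{r-1}=\frac{1}{k-1-r}\binom{k-1-r}{r}$ lands precisely on the displayed expression for $b_{n,k}$. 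Your route buys two things: it avoids the wrap-around case analysis entirely, and, unlike the paper's proof as written, it actually completes the algebra connecting the block count to the stated formula; the paper's linearization, on the other hand, is more elementary in that it needs no double-counting argument for the cyclic factor.
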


\begin{proof}
   We remark, from the geometry point of view, that wheel graph $W_n$ is union of one star graph $S_n$ and a cycle $C_{n-1}$ obtained by identifying the pendants vertices of $S_n$ with the vertices of $C_{n-1}$. We colloquially refer to $W_n$ as a graph separated in two parts an $n$-star $S_n$ and an ($n-1$)-cycle $C_{n-1}$.

   By Theorem \ref{t:join} we have to compute $\widetilde{A}(E_1,C_{n-1};x)$.
   Fix $n \geq 4$. In order to compute $\widetilde{A}(E_1,C_{n-1};x)$, let us consider $S\subset V(W_n)$ containing the central vertex and $|S|\ge2$.
   Hence, we  separately analyze subsets $S$ with different cardinality.

   \begin{description}
     \item[$2\leq |S|=k \leq n-1$] {Since $S$ contains the center vertex there are ${n-1 \choose k-1}$ ways to choose $S\cap V(C_{n-1})$, but we can obtain defensive alliances with two different exact indices as below

         \begin{itemize}
            \item {There is an isolated  vertex in $S\cap V(C_{n-1})$. For each $S$ we have an exact defensive $\min\{k-1 - (n-1-k+1), 1-2\}$-alliance since the others vertices in $C_{n-1}$ do not contribute to the exact index of $S$.
                Denote the number of those $S$ by $a_{n,k}$. Therefore, does appear the addend $a_{n,k} x^{n + \min\{2k-1-n, -1\}}$.
                }
            \item {None of the $k-1$ vertices in $C_{n-1}$ is isolated. So, all $S$ are exact defensive $\min\{k-1 - (n-1-k+1), 2-1\}$-alliances.
                Denote the number of those $S$ by $b_{n,k}$. Therefore, does appear the addend $b_{n,k} x^{n + \min\{2k-1-n, 1\}}$.      
                }
         \end{itemize}
         }

     \item[$|S|= n$] {In this case $S=V(W_n)$ and induces the wheel graph $W_n$. So, we have an exact defensive $3$-alliance, since $\d_{W_n} = 3$ and $W_n$ is connected. So, the addend $x^{n + 3}$ appears.
         }
   \end{description}

   Now we compute $b_{n,k}$ by solving the following combinatorial problem

   \emph{How many cyclic binary strings of length $n-1$ have exactly $k-1$ numbers $1$'s such that does not appear the substring $010$?}\footnote{Note that cyclic binary strings identify rotations, \emph{i.e.}, string $100$ is different to $010$ in spite of a shifting (rotation).}

   Now enumerate the vertices of $C_{n-1}$ from $1$ to $n-1$ such that consecutive labels are adjacent. So, it is equivalent to count configurations of $r$ blocks of several $1$s separated by at least one zero for $1<r<\lceil n\rceil/2$. Hence, first choose configurations of $r$ blocks of $1$s interlaced in a chain of $n-k$ zeros, and secondly select the length of the $r$ blocks of $1$s, \emph{i.e.}, $r$ addends with sum $k-1$ where each addend is greater than or equal two. Analyze separately the following cases
   
   \begin{itemize}
     \item[I]   {
                \begin{figure}[h]
                \centering
                \scalebox{1.5}
                {\begin{pspicture}(-0.2,-0.2)(0.2,-0.2)
                \uput[0](0,0){$0\_\_0\_\_0\_\_\ldots 0\_\_0\_\_$}
                \end{pspicture}}
                \end{figure}

                \vspace{-.3cm}

                If the first one is  $0$ (see above), then there are ${n-k \choose r} {k-2-r \choose r-1}$ different ways.
                }   
     \item[II]  {
                \begin{figure}[h]
                \centering
                \scalebox{1.5}
                {\begin{pspicture}(-0.2,-0.1)(0.2,-0.2)
                \uput[0](0,0){$10\_\_0\_\_0\_\_\ldots 0\_\_0.1$}
                \end{pspicture}}
                \end{figure}

                \vspace{-.3cm}

                If the first one is $1$ and second is $0$ (so the last one is $1$, too; see above), then there are ${n-k-1 \choose r-1} {k-2-r \choose r-1}$ different ways.
                }               
     \item[III]  {
                \begin{figure}[h]
                \centering
                \scalebox{1.5}
                {\begin{pspicture}(-0.2,-0.1)(0.2,-0.2)
                \uput[0](0,0){$11.0\_\_0\_\_0\_\_\ldots 0\_\_0$}
                \end{pspicture}}
                \end{figure}

                \vspace{-.3cm}

                If the firsts two are $1$s and the last one is $0$ (see above), then there are ${n-k-1 \choose r-1} {k-2-r \choose r-1}$ different ways.
                }  
     \item[IV]  {
                \begin{figure}[h]
                \centering
                \scalebox{1.5}
                {\begin{pspicture}(-0.2,-0.1)(0.2,-0.2)
               \uput[0](0,0){$11.0\_\_0\_\_0\_\_\ldots 0\_\_0.1$}
                \end{pspicture}}
                \end{figure}

                \vspace{-.3cm}

                If the firsts two and the last one are $1$s (see above), then there are ${n-k-1 \choose r-1} {k-2-r \choose r}$ different ways.
                }  
    \end{itemize}



%
\end{proof}

The following results will be useful which appear in \cite[Proposition 3.4]{CRST}.

\begin{proposition}\label{p:Disjoint_Cn}
   Let $C_n$ be a cycle graph with order $n\geq 3$, then
         \begin{equation}\label{eq:Cn}
           A(C_n;x) = n \, x^{n-2} + n(n-2) \, x^n + x^{n+2}.
         \end{equation}
\end{proposition}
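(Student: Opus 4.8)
The plan is to compute $A(C_n;x)$ directly from the definition by enumerating the connected exact defensive alliances of $C_n$ together with their exact indices. Since $C_n$ is $2$-regular we have $\Delta=\delta=2$, so $\mathcal{K}=\{-2,-1,0,1,2\}$ and the only exponents that can occur lie in $\{x^{n-2},\dots,x^{n+2}\}$. The key structural observation is that the connected induced subgraphs of a cycle are precisely the singletons, the proper arcs (paths of consecutive vertices) of length $2\le j\le n-1$, and the whole cycle $V(C_n)$; hence it suffices to determine $k_S$ for each of these three types and to count how many sets of each type occur.

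For each connected $S$ I would compute $k_S=\min_{v\in S}\big(\delta_S(v)-\delta_{\bar S}(v)\big)$, using that every vertex of $C_n$ has degree $2$. A single vertex has both neighbours outside $S$, giving $k_S=0-2=-2$. In a proper arc of length $j\ge 2$ the two endpoints each have exactly one neighbour inside $S$ and one outside, contributing $1-1=0$, while every interior vertex contributes $2-0=2$; the minimum is therefore attained at an endpoint and $k_S=0$. For $S=V(C_n)$ every vertex has both neighbours inside, so $k_S=2$. In particular the indices $-1$ and $+1$ never arise for a connected set, which already accounts for the absence of the $x^{n-1}$ and $x^{n+1}$ terms in~\eqref{eq:Cn}.

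It then remains to count. There are $n$ singletons, each an exact defensive $(-2)$-alliance, contributing $n\,x^{n-2}$. For each length $j$ with $2\le j\le n-1$ the labelled cycle carries exactly $n$ arcs (one per choice of starting vertex, and for $j\le n-1$ distinct starts give distinct subsets), each an exact defensive $0$-alliance; summing over the $n-2$ admissible lengths yields $n(n-2)$ such sets and the term $n(n-2)\,x^{n}$. Finally $V(C_n)$ is the unique exact defensive $2$-alliance, contributing $x^{n+2}$. Collecting the three contributions gives the stated identity~\eqref{eq:Cn}.

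The only genuinely delicate points are justifying that the connected induced subsets are exactly the arcs and the full cycle, so that no other configuration contributes, and the bookkeeping that each admissible arc length is realised by precisely $n$ labelled arcs. Once these are settled, the exact-index computation and the assembly of the polynomial are immediate, and I expect the verification of the connected-subset classification to be the main (though mild) obstacle.
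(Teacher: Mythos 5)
Your proof is correct and complete. Note that the paper itself gives no argument for this proposition --- it is imported verbatim from \cite[Proposition 3.4]{CRST} --- so there is no internal proof to compare against; your direct enumeration (singletons as exact defensive $(-2)$-alliances, proper arcs as exact defensive $0$-alliances, and $V(C_n)$ as the unique exact defensive $2$-alliance, counted as $n$, $n(n-2)$ and $1$ respectively) is exactly the standard computation one would expect behind the cited result. The one small point worth making explicit is that $k_S=\min_{v\in S}\bigl(\delta_S(v)-\delta_{\bar S}(v)\bigr)$ only after checking this minimum lies in $\mathcal{K}=[-2,2]\cap\mathbb{Z}$, which holds here since the minimum is always $-2$, $0$ or $2$; with that observation your classification and counting close the argument.
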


Let $A(G;x)$ be the alliance polynomial of a graph $G$, we denote by $\Deg(A(G;x))$ and $\Deg_{\min}(A(G;x))$ the maximum degree and minimum degree of its terms, respectively.
We recall some previous results for general graphs will be useful which appear in \cite[Theorems 2.4 and 2.5]{CRST}.

\begin{lemma}\label{p:AlliPoly}
   Let $G$ be a graph. Then, $A(G;x)$ satisfies the following conditions

   \begin{enumerate}[i)]
     \item {$A_{-\D}(G)$ and $A_{-\D+1}(G)$ are the number of vertices in $G$ with degree $\D$ and $\D-1$, respectively.}
     
     \item {$A(G; 1) < 2^n$ is the number of connected induced subgraphs in $G$.}

     \item {$A_{\D}(G)$ is equal to the number of connected components in $G$ which are $\D$-regular.}

     \item {$n + \d \leq \Deg(A(G;x)) \leq n + \D$.}
\end{enumerate}
\end{lemma}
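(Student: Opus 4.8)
The plan is to reduce all four items to a single identity for the exact index. For any nonempty $S\subseteq V$ and any $v\in S$ we have $\delta_S(v)+\delta_{\bar S}(v)=\delta(v)$, so the defining inequality $\delta_S(v)\ge \delta_{\bar S}(v)+k$ is equivalent to $k\le 2\delta_S(v)-\delta(v)$. Hence $S$ is a defensive $k$-alliance precisely when $k\le\min_{v\in S}\bigl(2\delta_S(v)-\delta(v)\bigr)$, and the exact index is
\[
k_S=\min_{v\in S}\bigl(2\delta_S(v)-\delta(v)\bigr)=\min_{v\in S}\bigl(\delta_S(v)-\delta_{\bar S}(v)\bigr).
\]
From $0\le\delta_S(v)\le\delta(v)\le\Delta$ one reads off the uniform bounds $-\Delta\le k_S\le\Delta$, so $k_S\in\mathcal K$ and the polynomial is well defined. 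I would record these extremal cases explicitly, since each of the four items is a statement about when $k_S$ attains a prescribed value.

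For item iv) I would note that $\Deg(A(G;x))=n+\max_S k_S$, where $S$ ranges over connected nonempty sets. The upper bound $k_S\le\Delta$ is the inequality above, giving $\Deg(A(G;x))\le n+\Delta$. For the lower bound I would test $S=V(C)$ for a connected component $C$ of $G$: then $\delta_{\bar S}(v)=0$ for every $v\in S$, so $k_S=\min_{v\in C}\delta(v)\ge\delta$; choosing the component containing a vertex of minimum degree produces a connected alliance with $k_S=\delta$, whence $\Deg(A(G;x))\ge n+\delta$. Item ii) is then immediate: $A(G;1)=\sum_{k}A_k(G)$ counts each connected nonempty $S$ exactly once, sorted by its unique exact index $k_S$, so $A(G;1)$ equals the number of connected induced subgraphs of $G$; this is at most $2^n-1<2^n$, because a connected induced subgraph is in particular a nonempty subset of $V$.

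The remaining two items are extremal-index computations driven by connectivity. For a connected $S$ with $|S|\ge 2$, every vertex has a neighbour inside $S$, so $\delta_S(v)\ge1$ and thus $2\delta_S(v)-\delta(v)\ge 2-\Delta$; consequently $k_S\ge -\Delta+2$ for every non-singleton connected alliance. Therefore the values $-\Delta$ and $-\Delta+1$ can only be realized by singletons $S=\{v\}$, for which $k_S=-\delta(v)$. This gives $k_S=-\Delta$ iff $\delta(v)=\Delta$ and $k_S=-\Delta+1$ iff $\delta(v)=\Delta-1$, proving item i). For item iii), the equality $2\delta_S(v)-\delta(v)=\Delta$ forces $\delta_S(v)=\delta(v)=\Delta$, i.e. $\delta_{\bar S}(v)=0$ and $v$ has degree $\Delta$; so $k_S=\Delta$ holds iff every $v\in S$ has degree $\Delta$ and keeps all its neighbours inside $S$. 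Such an $S$ is closed under adjacency, hence a union of components, and being connected it is a single $\Delta$-regular component; conversely each $\Delta$-regular component yields one such $S$, so $A_{\Delta}(G)$ counts the $\Delta$-regular connected components.

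The computations are short once the identity for $k_S$ is in place, so there is no deep obstacle; the only points requiring care are the interplay between connectivity and the index. Specifically, I would be careful to justify that non-singleton connected alliances cannot reach the two lowest indices in item i) and that index $\Delta$ forces each vertex of $S$ to retain all its neighbours in item iii): both hinge on translating \emph{connected induced subgraph} into the vertexwise bound $\delta_S(v)\ge1$ and on the equality analysis of $2\delta_S(v)-\delta(v)\le\Delta$. The disconnected case in item iv), where $V$ itself is not an admissible alliance and one must work component by component, is the other place I would treat explicitly.
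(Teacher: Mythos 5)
Your proposal is correct, but note that the paper itself offers no proof of this lemma: it is stated as a recollection of \cite[Theorems 2.4 and 2.5]{CRST}, so there is no internal argument to match against. What you supply is a self-contained derivation, and its engine --- the identity $k_S=\min_{v\in S}\bigl(2\delta_S(v)-\delta(v)\bigr)$, obtained from $\delta_S(v)+\delta_{\bar S}(v)=\delta(v)$ --- is exactly the right reduction: it makes i), iii) and iv) equality analyses of a single expression rather than four separate ad hoc arguments. I checked the delicate points and they hold: for i), a connected $S$ with $|S|\ge 2$ forces $\delta_S(v)\ge 1$ for every $v\in S$, hence $k_S\ge -\Delta+2$, so the two lowest indices are realized only by singletons $\{v\}$ with $k_{\{v\}}=-\delta(v)$; for iii), $2\delta_S(v)-\delta(v)=\Delta$ indeed forces $\delta_S(v)=\delta(v)=\Delta$, making $S$ closed under adjacency and hence (being connected) a single $\Delta$-regular component; for iv), the component containing a minimum-degree vertex is a connected exact defensive $\delta$-alliance since $\delta_{\bar S}(v)=0$ there, which gives the lower bound even when $G$ is disconnected and $V$ itself is not a connected alliance; and for ii), each connected nonempty $S$ has a unique exact index, so $A(G;1)$ counts connected induced subgraphs and is at most $2^n-1<2^n$. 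Two minor points worth making explicit if you write this up: you should observe that $k_S$ as defined by your minimum automatically lies in $\mathcal{K}=[-\Delta,\Delta]\cap\mathbb{Z}$ (you do note the bounds, which is what legitimizes calling it the exact index), and in item i) the degenerate cases $\Delta\le 1$ (where $-\Delta+1$ may coincide with or exceed other admissible indices) still come out right under your singleton analysis, with the convention that a coefficient $A_k$ outside the realized range is zero.
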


The following result allows to characterize the wheel graphs by its alliance polynomials. 

\begin{theorem1}
Wheel graphs are characterized by the alliance polynomial. 
\end{theorem1}

\begin{proof}
Let $W_n$ be a wheel graph with order $n\ge4$ and $G$ a graph such that $A(G;x)=A(W_n;x)$. 
   Suppose that $G$ has order $n_1$ and degree sequence $\{\delta_{1}, \delta_{2}, \ldots,\delta_{n_1}\}$ (ordered as follows $\delta_{1}\geq \delta_{2}\geq \cdots \geq\delta_{n_1}$).
   Since $\Deg_{min}(A(G;x)) = 1$, there is $w\in V(G)$ such that for every $v\in V(G) \setminus\{w\}$ we have $v \sim w$; we say that $w$ is a central vertex in $G$ (notice that by Lemma \ref{p:AlliPoly} item 1 it is unique). Therefore, $G$ is a connected graph, $\d_{n_1} \geq 1$ and $G$ contains an isomorphic subgraph of $S_{n_1}$, we denote this subgraph by $G_S$.

   Since $\Deg(A(G;x)) = n + 3$, we have
   \begin{equation}\label{eq:Wn1}
   n_1 + \d_{n_1} \leq n+3, \quad \Longrightarrow \quad n_1 \leq n + 2. 
   \end{equation}

   By \eqref{eq:Wn}, we have
   \[
   \begin{aligned}
   A(G;1) & =  1 + (n-1) + (n-1)(n-3) + 1 + \sum_{k=2}^{n} {n-1 \choose k-1} \\
   & = (n-1)(n-2) + 1 + 2^{n-1} > 2^{n-1}.
   \end{aligned}
   \]

   By Lemma \ref{p:AlliPoly} item 2 we have $n_1 \geq n$.
   Then, $n \leq n_1  \leq n + 2$.

   If $n_1 = n + 2$, then $\d_{n_1} = 1$ by Lemma \ref{p:AlliPoly} item 4.
   Consider now $v\in V(G)$ with $\d_{G}(v) = 1$.
   Notice that, $V(G)\setminus \{v\}$ is a defensive $1$-alliance in $G$, so, we have two defensive $1$-alliances in $G$. So, it is a contradiction since $A(G;x)$ is monic polynomial of degree $n+3$ only one term with degree greater than $n+1$. Therefore, $n_1 \leq n + 1$.

   If $n_1 = n+1$, then $1 \leq \d_{n_1} \leq 2$ by \eqref{eq:Wn1}. Besides, if $\d_{n_1} = 1$, then $V(G)$ is an exact defensive $1$-alliance and so, appear a term $x^{n_1 + 1} = x^{n+2}$; this is a contradiction with \eqref{eq:Wn}, thus $\d_{n_1} = 2$.
   Consider now $v\in V(G)$ with $\d_{G}(v) = 2$ and $v_1$ the adjacent vertex of $v$ different to $w$. If $\d_{G}(v_1) \geq 3$, is easy to check that $V(G)\setminus\{v\}$ is a defensive $1$-alliance in $G$, so, since $V(G)$ is an exact defensive $2$-alliance, we have that the number of defensive $1$-alliance in $G$ is at least $2$, but this is a contradiction with \eqref{eq:Wn}. This implies that any vertex in $G$ with degree $2$ is adjacent to $w$ and other vertex of degree $2$.
   Assume that $v_1,v_2 \in V(G)$ with $v_1 \sim v_2$ and $\d_{G}(v_1) = 2 = \d_{G}(v_2)$.
   If $n > 4$, then $V(G)\setminus\{v_1,v_2\}$ is a defensive $1$-alliance in $G$ and so, $A(G;x) \neq A(W_n;x)$ since $G$ have at least $2$ defensive $1$-alliances. If $n=4$, then $W_4 \simeq K_4$ and \cite[Theorem 3.9]{CRST} gives the uniqueness of $A(W_4;x)$. So, such graph $G$ with order $n+1$ such that $A(G;x)=A(W_n,x)$ does not exist.

   Now, we assume $n = n_1$.
   Our next claim is that $\d_2 = 3$.
   Seeking for a contradiction assume that $\d_2>3$. Consider $v_2\in V(G)$ with $\d_{G}(v_2)=\d_2\ge4$. So, $S=\{v_2\}$ is an exact defensive ($-\d_2$)-alliance which provides a term $x^{n-\d_2}$. Since $n-\d_2<n-3$, \eqref{eq:Wn} gives that $n-\d_2$ is odd, \emph{i.e.}, $n-\d_2=2t-1$ for some $t\in \mathbb{N}$.
   Note that every $S\subset V(G)$ with $w\in S$ and $|S|=t$ is an exact defensive ($-\d_2$)-alliance, since $\d_S(w)-\d_{\overline{S}}(w)=t-1-(n-t)=2t-1-n=-\d_2$ and $\d_S(v)-\d_{\overline{S}}(v)\ge1-(\d_2-1)> -\d_2$ for every $v\in S\setminus\{w\}$. Therefore, $A_{-\d_2}(G)\ge {n-1 \choose t-1} + 1 > {n-1 \choose t-1} = A_{-\d_2}(W_n)$ and $A(G;x)\neq A(W_n,x)$. This is the contradiction we were looking for, so $\d_2\le3$.
   Besides, if $\d_2<3$, then $G$ is a proper subgraph of $W_n$ and \cite[Proposition 2.7]{CRST} gives that $A(G;x)\neq A(W_n;x)$. Thus, we have $\d_2=3$.
   Taking $\d_2=3$ in the previous argument, we obtain that $A_{-3}(G)$ is the number of vertex with degree $3$ plus the number of subset $S$ with $\{w\}\subset S\subset V(G)$ and $|S|=(n-2)/2$. Thus, $A_{-3}(G)=A_{-3}(W_n)$ if and only if $\d_2=\d_n$ (\emph{i.e.}, every vertex in $V(G)$ different of $w$ has degree $3$).
   So, we have that degree sequence of both graphs is $\{n-1,3,\ldots,3\}$; in fact, $G_C:=\langle V(G)\setminus\{w\}\rangle$ is a $2$-regular graph.
   Finally, seeking for a contradiction assume that $\langle V(G)\setminus\{w\}\rangle$ is disconnected. Note that $G$ is an isomorphic graph to $E_1+G_C$, besides, $G_C$ is a disjoint union of $r\ge2$ cycle graphs. Without loss of generality we can assume that $G_C \simeq \cup_{i=1}^{r} C_{n_i}$ with $\sum_{i=1}^{r} n_i = n-1$. Hence, it suffices to prove $A(G_C;1)\ne A(C_{n-1};1)$ by Theorem \ref{t:join}.
   Then, by Proposition \ref{p:Disjoint_Cn} we have $A(G_C;1)=A(C_{n_1};1)+\ldots + A(C_{n_r};1)=\sum_{i=1}^{r} \left({n_i}^2 - n_i + 1\right)$. It is easy to check that $\sum_{i=1}^{r} \left({n_i}^2 - n_i + 1\right) < \left(\sum_{i=1}^{r} n_i \right)^2 - \left(\sum_{i=1}^{r} n_i\right) + 1$. This is the contradiction we were looking for, therefore, $G_C$ is connected.
\end{proof}

\section{Unimodality of the alliance polynomial of wheel graphs}\label{unimodal}
Unimodal polynomials arise often in combinatorics, geometry and algebra. The reader is referred to \cite{B,S} for surveys of the diverse techniques employed to prove that specific families of polynomials are unimodal. In this section we show that the alliance polynomial of several wheel graphs are unimodal.

A finite sequence of real numbers $(a_{0} , a_{1} , a_{2} , . . . , a_{n})$ is said to be \emph{unimodal} if there is some $k \in \{0, 1, . . . , n\}$, called the \emph{mode} of the sequence, such that
  $$a_{0} \leq . . . \leq a_{k-1} \leq a_{k} \quad \text{and} \quad a_{k} \geq a_{k+1} \geq . . . \geq a_{n};$$
  the mode is unique if $a_{k-1} < a_{k}$ and $a_{k} > a_{k+1}$.
A polynomial is called unimodal if the sequence of its coefficients is.
Since several times there will be a lot of coefficients in an alliance polynomial that are zeros, along this paper we say that an alliance polynomial with coefficient zero in every even (odd, respectively) powers of $x$ is unimodal if the sequence of its non-zero coefficients is unimodal.

In \cite{CRST,CRST3}, authors study unimodality of alliance polynomial of some graphs: path, cycle, complete, complete without one edge, and cubic graphs. No all of them are unimodal, for instance, the alliance polynomial of path graphs $A(P_n;x)$ and complete graph without one edge $A(K_n/e;x)$ are unimodal if and only if $2\le n\le 4$.
We show that the alliance polynomial of wheel graphs with even order $A(W_{2n};x)$ are unimodal. 
The following result is a direct consequence of Propositions \ref{p:Wn} and \ref{p:Disjoint_Cn}

\begin{theorem}\label{t:W2n}
  $A(W_{2n};x)$ is unimodal with mode $A_{-1}(W_{2n})$.
\end{theorem}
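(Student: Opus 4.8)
The plan is to read $A(W_{2n};x)$ off explicitly from Propositions \ref{p:Wn} and \ref{p:Disjoint_Cn}, collect the coefficients power by power, and then verify the two monotonicity chains that unimodality with the prescribed mode requires. Setting the order to $2n$ we have $\lfloor 2n/2\rfloor=\lceil 2n/2\rceil=n$ and $\xi_{2n}=0$, so the middle term $x^n$ disappears and only odd powers survive; writing $c_j$ for the coefficient of $x^{2j-1}$, the powers run through $j=1,\dots,n+2$. Combining the contributions of $A(E_1;x)=x$, of $A(C_{2n-1};x)=(2n-1)x^{2n-3}+(2n-1)(2n-3)x^{2n-1}+x^{2n+1}$, and of the three sums in \eqref{eq:Wn}, one gets (for $n\ge3$; the case $W_4\simeq K_4$, with coefficient sequence $(4,6,4,1)$, is a one-line check)
\[
c_j=\binom{2n-1}{j-1}\quad(1\le j\le n-2),\qquad c_{n-1}=\binom{2n-1}{n-2}+(2n-1),
\]
\[
c_n=\binom{2n-1}{n-1}+(2n-1)(2n-3)+\sum_{k=n+1}^{2n-1}a_{2n,k},\qquad c_{n+1}=1+\sum_{k=n+1}^{2n-1}b_{2n,k},\qquad c_{n+2}=1 .
\]
Since $A_{-1}(W_{2n})$ is the coefficient of $x^{2n-1}$, i.e.\ $c_n$, the statement asserts precisely that $c_1\le\cdots\le c_{n-1}\le c_n\ge c_{n+1}\ge c_{n+2}$.

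For the increasing chain I would use only monotonicity of binomial coefficients and nonnegativity. For $1\le j\le n-2$ the values $c_j=\binom{2n-1}{j-1}$ are strictly increasing because $j-1<n-1$; the extra $+(2n-1)>0$ gives $c_{n-1}>c_{n-2}$; and since $\binom{2n-1}{n-1}>\binom{2n-1}{n-2}$, $(2n-1)(2n-3)\ge(2n-1)$ for $n\ge2$ and $\sum a_{2n,k}\ge0$, one gets $c_n\ge\binom{2n-1}{n-1}+(2n-1)>\binom{2n-1}{n-2}+(2n-1)=c_{n-1}$. No further identity is needed up to the mode.

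On the other side $c_{n+1}=1+\sum b_{2n,k}\ge1=c_{n+2}$ is immediate, so the whole difficulty is concentrated in $c_n\ge c_{n+1}$. Here I would eliminate the $a$'s via $a_{2n,k}+b_{2n,k}=\binom{2n-1}{k-1}$ together with the elementary identity $\sum_{k=n+1}^{2n-1}\binom{2n-1}{k-1}=\sum_{i\ge n}\binom{2n-1}{i}-1=2^{2n-2}-1$ (half of $2^{2n-1}$ minus the top term). Writing $B=\sum_{k=n+1}^{2n-1}b_{2n,k}$, the two coefficients become $c_n=\binom{2n-1}{n-1}+(2n-1)(2n-3)+2^{2n-2}-1-B$ and $c_{n+1}=1+B$, whence $c_n-c_{n+1}=\binom{2n-1}{n-1}+(2n-1)(2n-3)+2^{2n-2}-2-2B$. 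Because $\binom{2n-1}{n-1}<2^{2n-2}$, the bound $B\le\binom{2n-1}{n-1}$ forces $2B\le\binom{2n-1}{n-1}+2^{2n-2}$ and hence $c_n-c_{n+1}\ge(2n-1)(2n-3)-2\ge0$, which closes the argument.

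The remaining inequality $B\le\binom{2n-1}{n-1}$ — in fact I expect equality — is the main obstacle, and everything else is bookkeeping. By the combinatorial meaning of $b_{2n,k}$, the quantity $B$ counts the subsets of the $2n-1$ cycle vertices of size between $n$ and $2n-2$ in which no chosen vertex is isolated (every maximal run of chosen vertices has length $\ge2$), and the claim is that there are exactly $\binom{2n-1}{n-1}$ of them. I would establish this either by summing the closed form, which after the rewriting $b_{n,k}=\sum_{r}\frac{n-1}{r}\binom{n-k-1}{r-1}\binom{k-r-2}{r-1}$ reduces to a Vandermonde-type convolution, or bijectively: passing to complements turns ``no isolated vertex'' into ``no cyclic gap equal to $2$'', and one matches these configurations of size $\le n-1$ with the $\binom{2n-1}{n-1}$ subsets of size $n-1$. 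Verifying this single count, rather than the surrounding estimates, is where the real work lies.
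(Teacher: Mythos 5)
Your bookkeeping is correct and matches the paper's: the coefficients $c_j$, the increasing chain $c_1<\cdots<c_{n-1}<c_n$ via monotonicity of binomial coefficients, the trivial $c_{n+1}\ge c_{n+2}$, and the algebraic reduction of $c_n\ge c_{n+1}$ (using $a_{2n,k}+b_{2n,k}=\binom{2n-1}{k-1}$ and $\sum_{k=n+1}^{2n-1}\binom{2n-1}{k-1}=2^{2n-2}-1$) to the single claim $B\le\binom{2n-1}{n-1}$, where $B=\sum_{k=n+1}^{2n-1}b_{2n,k}$, are all sound. But that claim is exactly where your proof stops: you defer it as ``where the real work lies,'' so the argument is incomplete at its crux. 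Worse, the roadmap you give for closing it is wrong: the equality $B=\binom{2n-1}{n-1}$ that you expect holds for $n=2,3,4$ but fails for $n\ge5$. For $n=5$ (cycle $C_9$) one computes $b_{10,6}=36$, $b_{10,7}=39$, $b_{10,8}=27$, $b_{10,9}=9$, so $B=111$, whereas $\binom{9}{4}=126$. Hence no bijection of the kind you describe can exist, and summing the closed form cannot yield the claimed Vandermonde identity; you would still be left with a strict inequality to prove, and nothing in your sketch supplies an argument for it.

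The paper avoids this issue entirely, and its device is also the natural repair for your proof. It establishes the term-by-term bound $b_{2n,k}\le a_{2n,k-1}$ by an explicit injection: in a configuration all of whose blocks of chosen cycle-vertices have length at least $2$, flip the second $1$ of the first block of ones to $0$; this creates an isolated chosen vertex, so the image is one of the configurations counted by $a_{2n,k-1}$, and distinct configurations have distinct images. Then $c_{n+1}=1+\sum_{k=n+1}^{2n-1}b_{2n,k}\le 1+\sum_{j=n}^{2n-2}a_{2n,j}\le 1+a_{2n,n}+\sum_{j=n+1}^{2n-1}a_{2n,j}$, and since $a_{2n,n}\le a_{2n,n}+b_{2n,n}=\binom{2n-1}{n-1}$ and $1<(2n-1)(2n-3)$ for $n\ge2$, this is strictly less than $c_n=\binom{2n-1}{n-1}+(2n-1)(2n-3)+\sum_{j=n+1}^{2n-1}a_{2n,j}$. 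No exact evaluation of $B$ is ever needed. As submitted, your proposal has a genuine gap at its key step, and the specific route you propose for filling it (an exact count or bijection) provably cannot work.
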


\begin{proof}
By Proposition \ref{p:Wn}, we have
 \[
 \begin{aligned}
   A(W_{2n};x) = &\quad x + (2n-1)x^{2n-3} + (2n-1)(2n-3)x^{2n-1} + x^{2n+1} + \\
             & + \displaystyle\sum_{k=2}^{n-1} {2n-1 \choose k-1} x^{2k-1} + {2n-1 \choose n-1} x^{2n-1} + \\
             & + \displaystyle\sum_{k=n+1}^{2n-1} a_{2n,k} x^{2n-1} + \sum_{k=n+1}^{2n-1} b_{2n,k} x^{2n+1}   + x^{2n+3}.
 \end{aligned}
 \]

First note that $A_{-2n+1}(W_{2n})< A_{-2n+3}(W_{2n})< \ldots < A_{-1}(W_{2n})$. Besides, $A_{1}(W_{2n})> A_{3}(W_{2n})$.

Now we claim $a_{m,r-1}\geq b_{m,r}$ for every $2\leq r\leq m-1$. Let $r$ be fixed. 
From each of the $b_{m,r}$ configurations we can obtain a configuration out of the $a_{m,r-1}$ configurations by replacing the second digit $1$ in the first block of ones\footnote{We consider the first block of 1s in the configuration to the block of ones which starts with the minimum index of vertices.} with a $0$. Since those $b_{m,r}$ configurations are different, we obtain $b_{m,r}\leq a_{m,r-1}$. 

On the other hand, we have that $b_{2n,n}<{2n-1 \choose n}={2n-1 \choose n-1}$. Therefore, we obtain $A_{-1}(W_{2n}) > A_{1}(W_{2n})$. 
\end{proof}

\end{document}